\newtheorem{theorem}{Theorem}
\newtheorem{lemma}[theorem]{Lemma}
\newtheorem*{conjecture*}{Conjecture}
\newcommand{\triple}[1]{{\left\vert\kern-0.25ex\left\vert\kern-0.25ex\left\vert #1
        \right\vert\kern-0.25ex\right\vert\kern-0.25ex\right\vert}}
\theoremstyle{definition}
\newtheorem{definition}[theorem]{Definition}
\newtheorem{remark}[theorem]{Remark}
\newcommand{\ep}{\varepsilon}
\newcommand{\te}{\theta}
\newcommand{\vp}{\varphi}
\newcommand{\Om}{\Omega}
\def\NN{\mathbb{N}}
\def\RR{\mathbb{R}}
\def\TT{\mathbb{T}}
\newcommand{\cB}{{\mathcal I}}
\newcommand{\cC}{{\mathcal C}}
\newcommand{\cR}{{\mathcal R}}
\newcommand{\pd}{\partial}
\newcommand\minus\backslash
\newcommand\lan\langle
\newcommand\ran\rangle
\newcommand{\supp}{\operatorname{supp}}
\DeclareMathOperator\Div{div}
\renewcommand\leq\leqslant
\renewcommand\geq\geqslant
\numberwithin{equation}{section}
\begin{document}

\title[Uniformly rotating Euler flows with compact support]{Uniformly rotating Euler flows\\ with compactly supported velocity}

\author[A. Enciso]{Alberto Enciso}
\address{   
\newline
\textbf{{\small Alberto Enciso}} 
\vspace{0.15cm}
\newline \indent Instituto de Ciencias Matem\'aticas, Consejo Superior de Investigaciones Cient\'\i ficas, 28049 \newline \indent Madrid,
 Spain}
\email{aenciso@icmat.es}

 \author[A. J. Fern\'andez]{Antonio J.\ Fern\'andez}
 \address{ \vspace{-0.4cm}
\newline 
\textbf{{\small Antonio J. Fern\'andez}} 
\vspace{0.15cm}
\newline \indent Departamento de Matem\'aticas, Universidad Aut\'onoma de Madrid, 28049 Madrid, Spain}
 \email{antonioj.fernandez@uam.es}

 \author[D. Ruiz]{David Ruiz}
 \address{ \vspace{-0.4cm}
\newline 
\textbf{{\small David Ruiz}} 
\vspace{0.15cm}
\newline \indent  
 IMAG, Departamento de An\'alisis Matem\'atico, Universidad de Granada, 18071 Granada, \newline \indent Spain}
 \email{daruiz@ugr.es}

\keywords{2D Euler, periodic solutions, rotating flows, circular flows}

\subjclass[2020]{35Q31, 35Q35, 35B32.}

%
%
\begin{abstract}  
For any positive integer $k$, we prove the existence of nontrivial $C^k$-smooth uniformly rotating solutions to the 2D incompressible Euler equations with compact spatial support. These solutions, which can be chosen to be small perturbations of radial flows, are the first example of smooth rotating flows with finite energy which are not locally radial. We also prove new rigidity results for rotating solutions which show that the geometric structure of these flows is severely constrained.

\end{abstract}

\maketitle

\section{Introduction}
Let us consider the 2D incompressible Euler equations,
\begin{equation}\label{E.Euler} 
\left\{
    \begin{aligned}
        \ &\pd_t v  + v\cdot \nabla v+\nabla p=0 \quad && \textup{in } \RR^2 \times (0,T)\,, \\
        & \Div v = 0 && \textup{in } \RR^2 \times (0,T)\,,\\
        &  v(\cdot,0 ) = v_0 && \textup{in } \RR^2\,,
    \end{aligned}
    \right.
\end{equation}
which describe the evolution of an ideal planar fluid in terms of its initial velocity, given by a divergence-free vector field $v_0:\RR^2\to\RR^2$. This system of equations can be conveniently written in terms of the vorticity $\omega:= \nabla^\perp\cdot v$ as
\begin{equation*}
\left\{
    \begin{aligned}
        \ &\pd_t \omega  + v\cdot \nabla \omega=0 \quad && \textup{in } \RR^2 \times (0,T)\,, \\
        &  \omega(\cdot,0 ) = \omega_0 && \textup{in } \RR^2\,,
    \end{aligned}
    \right.
\end{equation*}
where $v=\nabla^\perp \Delta^{-1}\omega$ is regarded as a nonlocal function of~$\omega$, and where $\omega_0:=\nabla^\perp\cdot v_0$. Here and in what follows, $\nabla^{\perp} := (-\pd_{x_2},\, \pd_{x_1})$. Once~$v$ is known, the pressure can be subsequently recovered by inverting an elliptic equation.

In this paper, we are concerned with {\em uniformly rotating}\/ solutions, that is, solutions of the form
\begin{equation}\label{E.rotating} 
v(x,t)= \cR_{-\Omega t} v_0(\cR_{\Omega t} x)\,,
\end{equation}
where $\cR_{\alpha}\in \mathrm{Mat}_{2\times 2}$ denotes the rotation matrix of angle $\alpha\in\RR$ in the clockwise direction, and where $\Om \in \RR$ is the (constant) {\em angular velocity}\/ of the solution. Equivalently, this means that the vorticity satisfies
$$ \omega(x,t) = \omega_0(\cR_{ \Omega t} x)\,.$$
It is easy to see that the initial velocity~$v_0$ generates a solution of this form if and only if
\begin{equation}\label{E.Euler1} 
(v_0 -  \Omega x^\perp) \cdot \nabla \omega_0 =0  \quad \textup{in } \RR^2 \,.
\end{equation}

There is a large literature on rotating Euler flows, mainly in the case of patches (that is, solutions whose initial vorticity is an indicator function). The first nontrivial example was found by Kirchhoff~\cite{Kirchhoff1876}, who showed that elliptical patches rotate uniformly. Later, Deem and Zabusky~\cite{DeemZabusky1978} identified numerically families of simply-connected rotating patches with $m$-fold symmetry, a result rigorously established by Burbea~\cite{Burbea1982} through local bifurcation from the unit disk for each $m\ge2$. Hmidi, Mateu, and Verdera~\cite{HmidiMateuVerdera2013} and Castro, C\'ordoba, and G\'omez-Serrano~\cite{CastroCordobaGomezSerrano2016} established the boundary regularity of these solutions, and Hassainia, Masmoudi, and Wheeler~\cite{HassainiaMasmoudiWheeler19} recently studied global bifurcation properties that are consistent with the formation of corner-like structures. Further information on uniformly rotating patches can be found in~\cite{HmidiMateu2017, HmidiMateuVerdera2015, Park2022} and references therein.

The only known nontrivial smooth rotating solutions with compactly supported vorticity were constructed by Castro, C\'ordoba, and G\'omez-Serrano in~\cite{CastroCordobaGomezSerrano2019}. This is proved via bifurcation from radial solutions, with the key idea of writing the vorticity in terms of a modified stream function so that each level curve deforms a circular contour of the radial profile. Rotating solutions with discontinuous compactly supported vorticity (different from indicator functions) were constructed in \cite{GarciaHmidiSoler2020,GarciaHmidiMateu2024}. Just as in the case of patches, in all these examples the velocity decays as $1/|x|$ at infinity, so in particular the energy is not finite.

In this paper we are concerned with solutions for which the velocity field is compactly supported. Since the velocity is divergence-free, we can write it in terms of a stream function as $v_0=\nabla^\perp\psi$. We now use the change of variables $\phi(x):=\psi(x) - \frac{\Om}{2}|x|^2$ so that \eqref{E.Euler1} becomes
\begin{equation}\label{E.Euler2} 
	\nabla^\perp\phi  \cdot \nabla \Delta\phi =0  \quad \textup{in } \RR^2\,.
\end{equation}
Note that the velocity has compact spatial support if and only if $\phi(x)+\frac\Om2|x|^2$ is constant outside a ball. A trivial class of solutions to \eqref{E.Euler2} are the {\em radial}\/ ones: any function~$\phi$ which only depends on the distance to the origin satisfies~\eqref{E.Euler2} for any angular velocity~$\Omega$, and the solution is in fact stationary. A slightly more general class of solutions, which we call {\em locally radial}\footnote{For details, see Definition~\ref{D.locradial} in the main text.}, is obtained by taking $\phi$ to be a superposition of functions which are radially symmetric around different points. If we glue these functions so that  $\phi(x) = - \frac{\Om}{2} |x|^2$ outside a ball, then we obtain a compactly supported rotating solution (see Figure \ref{F.fig1}).

\begin{figure}[t] 
	\centering 
	\begin{minipage}[c]{120mm}
			\centering
			\resizebox{120mm}{60mm}{\includegraphics{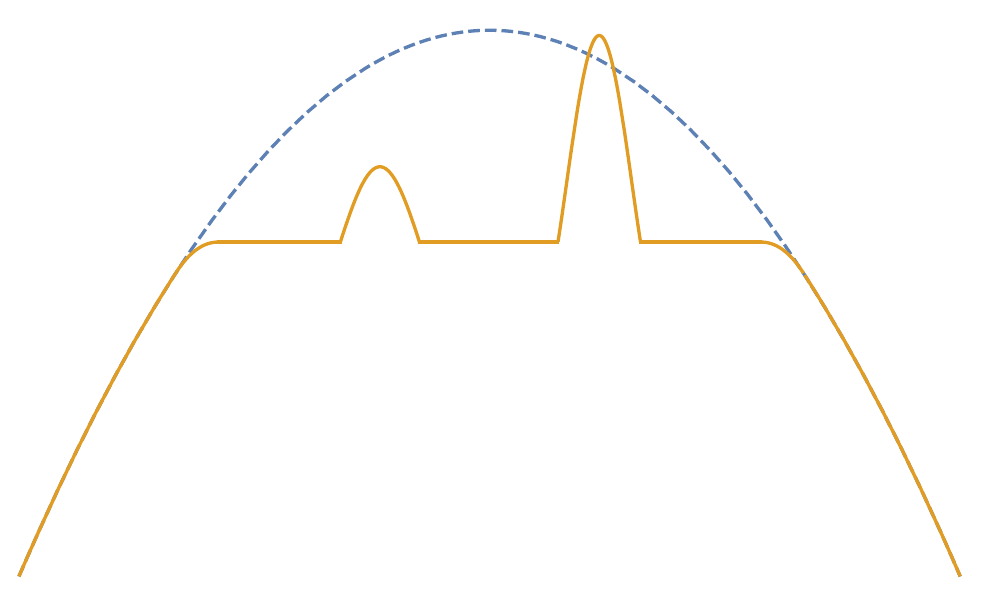}}
		\end{minipage}
	\caption{An illustrative diagram of the graph of a locally radial, but nonradial, function $\phi$ satisfying \eqref{E.Euler2}. Observe that $\phi(x)$ coincides with $- \frac{\Omega}{2}|x|^2$ (dashed) outside a ball, so the velocity has compact support.}
    \label{F.fig1}
\end{figure}

Our objective is to show the existence of compactly supported rotating solutions, with any angular velocity, for which $\phi$ is not locally radial. Specifically, our first main result is the following.

\begin{theorem}\label{T.main1}
For any positive integer $k$ and any $\Omega \in \RR$, there exist uniformly rotating Euler flows with angular velocity $\Omega$  whose initial velocity belongs to $C^k_c(\RR^2)$, and which are not locally radial. 
\end{theorem}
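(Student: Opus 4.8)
The plan is to recast \eqref{E.Euler2} as a semilinear free boundary problem and to produce the nonradial flows by bifurcating from a carefully chosen radial profile. Since $\nabla^\perp\phi$ is tangent to the level sets of $\phi$, equation \eqref{E.Euler2} says that the vorticity $\omega_0=\Delta\phi+2\Om$ is constant along each connected streamline, so wherever the level sets foliate nicely one has $\Delta\phi=F(\phi)$ for a profile $F$. Compact support of the velocity is equivalent to $\psi=\phi+\frac{\Om}{2}|x|^2$ being constant outside a ball, so I would look for $\phi$ equal to $c-\frac{\Om}{2}|x|^2$ in the exterior of a core $D$, matched across $\partial D$ to an interior solution of $\Delta\phi=F(\phi)$. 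Written in terms of $\psi$ this is an overdetermined problem, imposing both $\psi=c$ and $\partial_\nu\psi=0$ on $\partial D$; the finite-energy requirement, namely the vanishing of all multipole moments of $\omega_0$, is then automatically encoded by setting the exterior equal to the constant $c$. The radial baseline is a \emph{shielded} vortex $\phi_*(r)$ produced by ODE shooting for $\tfrac1r(r\phi_*')'=F(\phi_*)$, with $\phi_*(r)=c-\frac{\Om}{2}r^2$ for $r\ge R$ and zero net circulation $\int_0^R\omega_{0,*}\,r\,dr=0$ (equivalently $\psi_*'(R)=0$); any $\Om$ is reached after a rescaling, so $\Om$ may be fixed conveniently.

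The delicate point, and the reason a naive construction collapses to radial flows, is a Serrin-type rigidity. If the core $D$ were the disk $B_R$, then $\phi$ would solve the autonomous equation $\Delta\phi=F(\phi)$ in $B_R$ with $\phi$ and $\partial_\nu\phi$ both constant on $\partial B_R$, and the moving-plane method would force $\phi$ to be radial. More generally, if $\omega_0$ is a single-valued function of $\phi$ and the level sets are connected, then matching the compactly supported exterior across $\partial D$, together with unique continuation for the harmonic $\psi$, forces $\partial D$ to be a circle and the solution to be radial. Hence genuinely nonradial finite-energy solutions \emph{cannot} be obtained by a vanilla bifurcation from a nondegenerate monotone profile; one must arrange that the vorticity fails to be a single function of $\phi$.

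A natural way to evade this is to bifurcate from a \emph{degenerate} radial profile whose stream function $\phi_*$ has an interior critical circle $\{r=r_0\}$ (a maximum or minimum of $\phi_*$). Under an $m$-fold symmetric perturbation this critical circle should break into a necklace of $m$ maxima and $m$ saddles; the appearance of saddles disconnects the level sets of $\phi$ and lets $\omega_0$ take different values on the two sides of the separatrix, which is precisely the freedom needed to break symmetry while keeping the velocity compactly supported. Concretely, I would parametrize the free boundary as a graph $\rho(\theta)$, solve the interior semilinear Dirichlet problem to express $\phi$ in terms of $\rho$, and encode the remaining zero-velocity (normal-derivative) matching as a nonlinear functional equation $\mathcal{G}(\rho,\mu)=0$ in spaces of $m$-fold symmetric $C^{k+1,\alpha}$ functions, with $\mu$ a parameter in the profile $F$. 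Linearizing at the radial solution yields a Fourier multiplier whose symbol is a dispersion relation built from modified Bessel functions; choosing $m\ge2$ and $\mu=\mu_m$ so that this symbol has a simple zero with nonzero derivative in $\mu$, one applies the Crandall--Rabinowitz theorem to obtain a local curve of nonradial solutions.

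The main obstacle will be the analysis near the degenerate critical circle, where the streamline foliation and hence the linearized operator become singular: both the reduction to $\mathcal{G}$ and the verification of the transversality (simple crossing) conditions require controlling the nascent separatrix structure and the regularity of the free boundary there. Granting this, the bifurcating solutions are $m$-fold symmetric with $m\ge2$, hence not radial about any point; since the perturbation keeps the core connected, they are not locally radial either. Finally, taking $F\in C^{k}$ suitably flat at the boundary value guarantees $\phi\in C^{k+1}$ across $\partial D$, so that $v_0=\nabla^\perp\psi\in C^k_c(\RR^2)$, which completes the proof.
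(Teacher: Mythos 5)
Your proposal takes a genuinely different route from the paper, but it has a fatal structural flaw when $\Om\neq 0$: the free-boundary matching you propose is incompatible with the rigidity result that rotating solutions actually satisfy, namely Theorem~\ref{T.Rigidity2} of this very paper. In your setup the velocity vanishes identically outside a non-circular core $D=\{r<\rho(\te)\}$, so $\phi=c-\frac{\Om}{2}|x|^2$ on $\RR^2\setminus D$ and, by the $C^{k+1}$ matching, $\nabla\phi(x)=-\Om x\neq 0$ at every $x\in\pd D$. Now set $R_0=\max_\te\rho(\te)$ and pick $x_0\in\pd D$ with $|x_0|=R_0$. Every circle of radius $r>R_0$ lies in the exterior region, so $[R_0,\infty)$ is contained in the set $\cB$ of Theorem~\ref{T.Rigidity2}. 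Moreover, if $\phi$ were constant on all circles of radius slightly below $R_0$, those constants would be forced (by the portion of each such circle lying outside $\overline D$, near the minimum of $\rho$) to equal $c-\frac{\Om}{2}r^2$; then the zero-velocity region would extend strictly inside $D$ and $\pd D$ would not be the actual interface. Hence, for a genuine bifurcation with non-constant $\rho$, there are radii arbitrarily close to $R_0$ from below that are not in $\cB$, so $R_0\in\pd\cB$ is non-isolated in $\cB$, and Theorem~\ref{T.Rigidity2} forces $\nabla\phi\equiv 0$ on all of $\pd B_{R_0}(0)$ --- contradicting $\nabla\phi(x_0)=-\Om x_0\neq0$ (this applies at least for $k\geq2$, so that $\phi\in C^3$; but the theorem must hold for every $k$). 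In other words, no solution with your proposed structure exists, and the transversality and nondegeneracy conditions you ``grant'' for Crandall--Rabinowitz cannot in fact hold. Note that your necklace-of-saddles device is designed to evade the semilinear rigidity of Theorem~\ref{T.Rigidity1}, but Theorem~\ref{T.Rigidity2} makes no assumption whatsoever about the vorticity being a function of $\phi$, so it cannot be evaded this way. (It is also in tension with your own scheme: solving a single semilinear Dirichlet problem in $D$ makes $\omega_0$ a function of $\phi$ there.)

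The paper's construction is compatible with this rigidity precisely because the nonradial region is \emph{never} adjacent to the zero-velocity exterior: it is insulated from it by a layer of rigid rotation, i.e.\ a region where $\phi\equiv\mathrm{const}$ and hence $\nabla\phi\equiv0$. Concretely, the authors take the nonradial compactly supported \emph{stationary} solutions of~\cite{EFR}, whose stream functions $\overline{\phi}_n$ are supported in an annulus and vanish together with all derivatives up to order $k+1$ on its (non-circular) boundary, extend them by zero, and then interpolate between $0$ and $-\frac{\Om}{2}|x|^2$ with a radial cutoff acting only in a region where $\overline{\phi}_n\equiv0$; the resulting $\widetilde{\phi}_n$ satisfies \eqref{E.Euler2} everywhere because it is either equal to the stationary solution or radial, and $v_{0,n}=\nabla^\perp\widetilde{\phi}_n+\Om x^\perp\in C^k_c(\RR^2)$. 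If you wanted a self-contained proof along your lines, the correct target would not be a simply connected core with Serrin-type matching, but an annular core with $C^{k+1}$-flat (fully degenerate) overdetermined boundary conditions --- and that degenerate problem is exactly the content of~\cite{EFR}, which is also the hard analysis your proposal explicitly assumes rather than proves.
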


The proof of Theorem \ref{T.main1} relies on the existence of $C^k$~compactly supported stationary Euler flows, which we proved in our previous work~\cite{EFR}. Roughly speaking, the idea is to glue these stationary solutions to the exterior radial function $-\frac{\Om}{2}|x|^2$. Details are given in Section~\ref{S.Flexibility}.  

One might consider this to be an unusual approach to construct compactly supported, uniformly rotating solutions to \eqref{E.Euler}. However, the construction admits very limited flexibility: the geometric structure of these kind of flows is in fact subject to stringent constraints. The next two results make these restrictions explicit.

First, we show that this gluing procedure is essentially the only way to proceed, in that the gradient of the function~$\phi$ must vanish on any circle where it ceases to be radial.

\begin{theorem} \label{T.Rigidity2}
    Let $\phi \in C^3(\RR^2)$ be a solution to~\eqref{E.Euler2} such that $\phi(x) = - \frac{\Om}{2}|x|^2$ outside a ball. Consider the nonempty closed set 
    $$
    \cB:= \{r\in[0,\infty): \phi(x) = \phi(y) \textup{ for all } x, y \in \RR^2 \textup{ with } |x| = |y| = r\}.
    $$ 
    If $r \in \partial \cB\backslash\{0\}$ is not isolated in $\cB$, then $\nabla \phi = 0$ on $\pd B_r(0)$.
\end{theorem}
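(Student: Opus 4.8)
The plan is to work in polar coordinates $(\rho,\theta)$, writing $g(\rho,\theta):=\phi(\rho\cos\theta,\rho\sin\theta)$, and to prove the two components of $\nabla\phi$ vanish on $\partial B_r(0)$ separately. Since $\cB$ is closed and $r$ is not isolated in it, $r\in\cB$, so $g(r,\cdot)$ is constant and the tangential derivative $\partial_\theta g(r,\cdot)$ vanishes identically; equivalently, the Fourier coefficients $a_m(\rho):=\frac{1}{2\pi}\int_0^{2\pi}g(\rho,\theta)\,e^{-im\theta}\,\dd\theta$ satisfy $a_m(r)=0$ for $m\neq0$. Non-isolatedness supplies a sequence $r_n\to r$ in $\cB$ with $r_n\neq r$, and $a_m(r_n)=0$ as well. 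As $\phi\in C^3$ and $r>0$, each $a_m$ is $C^1$ near $r$, so forming difference quotients gives $a_m'(r)=\lim_n\frac{a_m(r_n)-a_m(r)}{r_n-r}=0$ for every $m\neq0$. Consequently the radial derivative $\partial_\rho g(r,\theta)=\sum_m a_m'(r)e^{im\theta}=a_0'(r)=:\lambda$ is independent of $\theta$, so on $\partial B_r(0)$ one has $\nabla\phi=\lambda\,\hat r$ with $|\nabla\phi|\equiv|\lambda|$. It therefore suffices to prove $\lambda=0$.

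I would argue by contradiction, assuming $\lambda\neq0$. Then $\partial_\rho g(r,\cdot)$ is a nonzero constant, so by continuity $\partial_\rho g$ stays bounded away from zero, and hence $\nabla\phi\neq0$, on a two-sided annulus $A=\{r-\delta<|x|<r+\delta\}$. On $A$ the equation \eqref{E.Euler2} means exactly that $\nabla\Delta\phi$ is parallel to $\nabla\phi$, so $\Delta\phi$ is constant along each connected level curve of $\phi$; since $\partial_\rho g\neq0$, for $\delta$ small the sets $\{\phi=c'\}\cap A$ are single closed curves encircling the origin, and $\Delta\phi=F(\phi)$ for a well-defined profile $F$. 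Differentiating this identity and using $\nabla\Delta\phi\parallel\nabla\phi$ gives $F'=\frac{\nabla\Delta\phi\cdot\nabla\phi}{|\nabla\phi|^2}$, which is continuous because $\Delta\phi\in C^1$; thus $F\in C^1$ and in particular is locally Lipschitz.

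To exploit that $\partial B_r(0)$ is itself a round level curve, I would compare $\phi$ with the radial solution $\psi$ of the ODE $\psi''+\rho^{-1}\psi'=F(\psi)$ determined by the Cauchy data $\psi(r)=c$, $\psi'(r)=\lambda$; regarded as a function of $|x|$, this $\psi$ solves $\Delta\psi=F(\psi)$ on $A$ and shares with $\phi$ both the value $c$ and the full gradient $\lambda\,\hat r$ along $\partial B_r(0)$. The difference $w:=\phi-\psi$ then has vanishing Cauchy data on $\partial B_r(0)$ and, writing $F(\phi)-F(\psi)=b(x)\,w$ with $b\in L^\infty$ via the $C^1$ bound on $F$, solves the elliptic equation $\Delta w=b\,w$. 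Unique continuation from the non-characteristic circle $\partial B_r(0)$ (here one invokes the Carleman/Aronszajn--Cordes Cauchy uniqueness for $\Delta+L^\infty$, which in two dimensions is especially robust via the similarity principle) forces $w\equiv0$ on a full neighborhood of $\partial B_r(0)$. Hence $\phi=\psi$ is radial on an annulus around $\partial B_r(0)$, so $\cB$ contains a neighborhood of $r$, contradicting $r\in\partial\cB$. Therefore $\lambda=0$ and $\nabla\phi=0$ on $\partial B_r(0)$.

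The main obstacle is precisely this last step: the Cauchy problem for an elliptic equation is ill-posed, so the conclusion $w\equiv0$ from vanishing Cauchy data cannot be reached by ODE or energy methods and genuinely requires a unique-continuation theorem, whose applicability must be justified with only a bounded potential $b$ (equivalently, only $C^3$ regularity of $\phi$ and $C^1$ regularity of $F$). A secondary technical point is ensuring that $F$ is globally well-defined on the annulus, i.e.\ that the relevant level sets are connected single curves, which is where the smallness of $\delta$ and the uniform non-vanishing of $\partial_\rho g$ enter.
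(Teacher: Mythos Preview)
Your proof is correct and follows essentially the same route as the paper's: both show the radial derivative is constant on $\partial B_r(0)$ via difference quotients along the sequence $r_n\to r$, assume it is nonzero, reduce to a semilinear equation $\Delta\phi=F(\phi)$ with $F\in C^1$, compare with the radial ODE solution sharing the same Cauchy data on $\partial B_r(0)$, and invoke unique continuation to force radiality near $r$, contradicting $r\in\partial\cB$. The only cosmetic differences are that you package the difference-quotient step via Fourier coefficients and build one global $F$ on an annulus, whereas the paper works with a local $f$ on balls $B_\varepsilon(x)$ and finishes with a finite covering of $\partial B_r(0)$.
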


This result is quite different from the previous rigidity results for uniformly rotating Euler flows. Among them, let us mention that G\'omez-Serrano, Park, Shi, and Yao~\cite{GPSY} showed that any uniformly rotating smooth solution with nonnegative vorticity and compact support is radial whenever $\Omega< 0$ (and locally radial if $\Omega=0$). Fan, Wang, and Zhang~\cite{FanWangZhan25} recently strengthened this sufficient condition to $\Omega \in (-\infty,\,\tfrac{1}{2}\inf \omega] \cup [\tfrac{1}{2}\sup \omega,\,\infty)$, both in the patch and in the smooth settings.

Finally, let us stress that the rotating solutions constructed in Theorem \ref{T.main1} do {\em not}\/ satisfy an equation of the form
\begin{equation}\label{E.semilinearphi}
	\Delta \phi + f(\phi ) = 0 \quad \textup{in } \RR^2\,,
\end{equation}
with $f\in C(\RR)$. We conclude the paper showing that this aspect of the construction is also necessary. In this direction, note that there is a large literature on rigidity results proving radial symmetry of solutions to 2D Euler equations in the stationary case; see, for instance, \cite{GPSY, R, Hamel, DeR-E-R, wang}, and the references therein. In the next theorem we prove radial symmetry for rotating solutions with compact support if $\phi$ is a solution to \eqref{E.semilinearphi}. 

\begin{theorem} \label{T.Rigidity1}
     Given some $\Om \in \RR \setminus \{0\}$, and some $f\in C(\RR)$, let $\phi \in C^{1}(\RR^2)$ be a solution to~\eqref{E.semilinearphi} such that
     $\phi(x)= - \frac{\Om}{2}|x|^2 
     $
     outside a ball. Then $\phi$ is locally radial. If, in addition, $f \in C^1(\RR)$, then $\phi$ is radially symmetric with respect to the origin.
\end{theorem}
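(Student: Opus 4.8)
The plan is to extract from the exterior condition a rigid constraint on $f$, and then to propagate the radial structure inward by combining level-set geometry with uniqueness for the radial ODE. After replacing $(\phi,\Om,f(\cdot))$ by $(-\phi,-\Om,-f(-\cdot))$ if necessary, I may assume $\Om>0$. Fix $R$ with $\phi(x)=-\frac{\Om}{2}|x|^2$ for $|x|\ge R$. Since $\Delta\bigl(-\frac{\Om}{2}|x|^2\bigr)=-2\Om$, the equation forces $f(\phi)=2\Om$ on $\{|x|>R\}$; as $\phi$ sweeps out the whole ray $(-\infty,-\frac{\Om}{2}R^2]$ there, I obtain the key rigidity $f\equiv 2\Om$ on $(-\infty,-\frac{\Om}{2}R^2]$. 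In particular $f\ne 0$ at every value attained outside $B_R$, and on $\pd B_R(0)$ one has the overdetermined data $\phi=-\frac{\Om}{2}R^2$ (constant) and $|\nabla\phi|=\Om R\ne0$ (constant), the normal derivative matching from the interior by the $C^1$ hypothesis.

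For the local radiality statement I would analyze the closed set $\cB$ of radii on which $\phi$ is constant, exactly as in Theorem~\ref{T.Rigidity2}: it contains $[R,\infty)$, and the goal is to show that whenever radial symmetry about the origin breaks at some $r\in\pd\cB$, the gradient $\nabla\phi$ vanishes on $\pd B_r(0)$, so that $\phi$ may be continued across $\pd B_r(0)$ as a radial function about a possibly different center; iterating yields the decomposition into radial pieces, i.e. local radiality. The difficulty is that, with $f$ merely continuous, the equation only gives $\phi\in C^{1,\alpha}_{\mathrm{loc}}$, so neither the moving-plane method nor uniqueness for the radial ODE is available, and the gradient-vanishing must be obtained directly from the geometry of the regular level curves (on each of which $\Delta\phi=-f(\phi)$ is constant) together with the maximum principle. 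I expect this to be the main obstacle, and the place where the continuity-versus-$C^1$ dichotomy in the statement originates.

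Granting local radiality, the upgrade to full radial symmetry when $f\in C^1$ is the cleaner half. Note first that $f\in C^1$ improves the solution to $\phi\in C^{2,\alpha}_{\mathrm{loc}}$, so the radial ODE makes classical sense. Let $W$ be the connected component of $\{\nabla\phi\ne0\}$ containing the exterior annulus; by local radiality $\phi|_W$ is radial about a single center, which must be the origin because $W$ contains $\{|x|>R\}$ where $\phi=-\frac{\Om}{2}|x|^2$. Thus $\phi=u(|x|)$ on $W$, where $u$ solves $u''+\tfrac1r u'+f(u)=0$ with $u(R)=-\frac{\Om}{2}R^2$, $u'(R)=-\Om R$; since $f$ is locally Lipschitz this Cauchy problem has a unique solution, defining $u$ down to the origin. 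It remains to exclude any switch of center: along a circle $\pd B_\rho(0)$ bounding $W$ from inside one has $u'(\rho)=0$, and $C^1$-matching of an inner piece radial about a center $p\ne0$ would force that inner profile to be constant on a full annulus about $p$. The resulting open set where $\phi\equiv c$, with $f(c)=0$ (as $\Delta\phi=0$ there), would propagate by the unique continuation principle for $\Delta(\phi-c)+\bigl(f(\phi)-f(c)\bigr)=0$ to all of $\RR^2$, contradicting the non-constant exterior form; the alternative $f(c)=0$ with $u'(\rho)=0$ is likewise excluded by ODE uniqueness, since then $u\equiv c$ near $\rho$ would contradict $u$ being non-constant just outside. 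Hence no branching occurs, $\phi=u(|x|)$ everywhere, and $\phi$ is radially symmetric about the origin.

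The main obstacle, as stressed above, is the local-radiality step for continuous $f$: without Lipschitz regularity one loses both moving planes and uniqueness for the radial ODE, so the vanishing of $\nabla\phi$ across the circles where radiality fails must be forced purely from the level-set geometry of the autonomous equation, using that $f$ is pinned to the nonzero constant $2\Om$ on the range of values attained near $\pd B_R(0)$. Establishing this rigidity at the $C^1$ level—rather than the $C^3$ level available in Theorem~\ref{T.Rigidity2}—is where the real work lies; once it is in place, the reduction to the radial ODE and the unique-continuation argument above close the proof.
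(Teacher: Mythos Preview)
Your proposal has a genuine gap: the local-radiality step for continuous~$f$ is not proved, only described as ``the main obstacle'' and ``where the real work lies.'' Since your treatment of the $C^1$ case also presupposes local radiality, neither conclusion is actually established. Your level-set/ODE/unique-continuation machinery may be salvageable, but as written it is a plan, not a proof, and the hardest step is precisely the one you leave open.

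The paper bypasses all of this with a single reduction you are missing. After normalizing to $\Om>0$, instead of working on the given ball~$B_R(0)$ one enlarges it: pick $\widetilde R\ge R$ so large that $\min_{\overline{B_R(0)}}\phi>-\tfrac{\Om}{2}\widetilde R^{\,2}$. Then $\phi(x)>-\tfrac{\Om}{2}\widetilde R^{\,2}$ throughout $B_{\widetilde R}(0)$, while $\phi=-\tfrac{\Om}{2}\widetilde R^{\,2}$ on $\pd B_{\widetilde R}(0)$. Setting $\varphi:=\phi+\tfrac{\Om}{2}\widetilde R^{\,2}$ and $g(s):=f\bigl(s-\tfrac{\Om}{2}\widetilde R^{\,2}\bigr)$ yields a \emph{positive} solution of $\Delta\varphi+g(\varphi)=0$ in the ball $B_{\widetilde R}(0)$ with zero Dirichlet data. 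This is exactly the setting of Brock's continuous-rearrangement symmetry theorem (giving local radiality for $g\in C(\RR)$) and of Gidas--Ni--Nirenberg (giving full radial symmetry about the origin for $g\in C^1(\RR)$). The observation you do make---that $f\equiv 2\Om$ on $(-\infty,-\tfrac{\Om}{2}R^2]$---is correct but unnecessary; the whole point is to manufacture positivity and then invoke the classical black boxes, rather than rebuild the symmetry theory from the geometry of level sets.
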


\section{Flexibility results} \label{S.Flexibility}

This section proves the existence of smooth compactly supported uniformly rotating Euler flows which are not locally radial, as stated in Theorem~\ref{T.main1}. 

Before going on, let us recall (see e.g.~\cite[Section 6]{Brock}) the precise definition of a locally radial function:

\begin{definition}\label{D.locradial} 
A function $\phi\in C^1(\RR^2)$ is {\em locally radial} if $\RR^2= S \cup A$, with $A= \bigcup_{k\in K}A_k$,
	where:
	\begin{enumerate}
		\item $ S=\{x\in\RR^2:\nabla \phi(x)=0\}$ is the critical set of~$\phi$.
		\item $K$ is a countable set,
		\item The sets $A_k=\{x\in\RR^2: r_k<|x-q_k|<R_k\}$ are disjoint open annuli, with $0 \leq r_k < R_k \leq +\infty$ and $q_k\in\RR^2$, and the restriction $\phi|_{A_k}$  depends only on the distance to the point~$q_k$.
	\end{enumerate}
\end{definition}

Since the proof of Theorem \ref{T.main1} relies on the smooth compactly supported stationary solutions to \eqref{E.Euler} we constructed in \cite{EFR}, let us now recall their main properties. Given a constant $a \geq {4}$ and functions $b,B\in C^{k+1}(\TT)$ bounded, for example, as
\[
\|b\|_{L^\infty(\TT)}+\|B\|_{L^\infty(\TT)}<\tfrac{1}{ 10}\,,
\]
we consider bounded domains defined in polar coordinates by
\begin{equation}\label{E.defOm}
\Om_a^{b,B}:=\{(r,\te)\in\RR^+\times\TT: a-3+b(\te)< r< a+3+B(\te)\}\,.
\end{equation}

\noindent The main result in \cite{EFR}, namely \cite[Theorem 1.1]{EFR}, is obtained as a consequence of the following elliptic result:

\begin{theorem}{\rm \hspace{-0.08cm} (\hspace{-0.1cm} \cite[Theorem 2.2]{EFR})} \label{T.EllipticStationary} There exist sequences $(a_n)_{n=1}^\infty \subset [4,+\infty)$, $(b_n)_{n=1}^\infty, (B_n)_{n=1}^\infty \subset C^{k+1}(\TT)$, and $(\phi_n)_{n=1}^\infty \subset C^{k+1}(\overline{\Omega}_{a_n}^{\,b_n,B_n})$ such that:
\begin{enumerate}
	\item $a_n\to a^*$ as $n\to\infty$. \smallbreak
	\item $b_n$ and $B_n$ are nonconstant functions which tend to~$0$ in the $C^{k+1}$-norm. \smallbreak
	\item $\phi_n$ are positive solutions to	
\begin{equation}\label{Ecuacion}
	\Delta \phi_n + f_{a_n}(|x|,\, \phi_n)  = 0\quad \text{ in }\Om_{a_n}^{\,b_n,B_n} \,,
\end{equation}
such that $\mathrm{D}^m\phi_n$ vanish identically on~$\partial \Omega_{a_n}^{b_n,B_n}$ for all $0\leq m\leq k+1$. \smallbreak

\item $\nabla^{\perp} \phi_n \cdot \nabla \Delta\phi_ n = 0$ in $\Omega_{a_n}^{b_n,B_n}$.  

\end{enumerate} 
\end{theorem}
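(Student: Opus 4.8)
The plan is to realize the three sequences as points on a local bifurcation branch issuing from a purely radial solution at a critical radius $a^*$; the index $n$ then merely labels points of this branch that converge to the bifurcation point $(a^*,0,0)$ as $n\to\infty$, which is precisely what is asked in items (i)--(ii).

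First I would construct the trivial branch. For $a$ in a neighbourhood of a value $a^*\geq4$ to be determined, one builds a radial profile $\Phi_a\in C^{k+1}([a-3,a+3])$ solving the radial form of \eqref{Ecuacion}, i.e.\ $\Phi_a''+\tfrac1r\Phi_a'+f_a(r,\Phi_a)=0$, with $\Phi_a>0$ on the open interval and with $\Phi_a$ and its derivatives up to order $k+1$ vanishing at both endpoints $r=a\pm3$. The mechanism producing this high-order contact is the degeneracy of the nonlinearity as its second argument tends to $0$: choosing $f_a(r,t)$ flat enough at $t=0^+$ forces a positive solution to meet the free level set $\{\phi=0\}$ tangentially to the prescribed order, which is exactly what later yields the overdetermined vanishing $\mathrm D^m\phi=0$ for $0\le m\le k+1$ and hence a $C^{k+1}$ extension by $0$. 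The $a$- and $r$-dependence of $f_a$ supplies the room needed to solve this radial two-point problem for a whole interval of $a$, giving the trivial family $a\mapsto(a,0,0,\Phi_a)$.

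Next I would recast the full overdetermined problem on the deformed annuli \eqref{E.defOm} as a single equation $\mathcal F(a,b,B)=0$ in which the boundary profiles $(b,B)$ are the unknowns and $a$ is the bifurcation parameter. For each admissible domain $\Om_a^{b,B}$ one first solves the Dirichlet problem \eqref{Ecuacion} with $\phi=0$ on $\partial\Om_a^{b,B}$, and then lets $\mathcal F$ record the normal derivative $\partial_\nu\phi$ on each boundary component, whose vanishing is the extra (Serrin-type) overdetermined condition; the higher-order vanishing $\mathrm D^m\phi=0$ then comes for free from the equation and the flatness of $f_a$. Thus $\mathcal F$ is a map of Dirichlet--to--Neumann type between suitable H\"older spaces on $\TT$, with $\mathcal F(a,0,0)=0$ along the trivial branch. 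Exploiting the rotational symmetry, $D_{(b,B)}\mathcal F(a,0,0)$ diagonalises in the Fourier basis $\{\cos(m\te),\sin(m\te)\}$, reducing the linearised study to a family of linear two-point problems indexed by $m$. The crux is to exhibit a radius $a^*$ and a mode $m\ge2$ at which the $m$-th operator develops a one-dimensional kernel spanned by a nonconstant profile, and to verify the Crandall--Rabinowitz transversality condition, namely that the governing determinant crosses zero simply as $a$ passes through $a^*$.

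I expect this spectral step to be the main obstacle: it requires a delicate analysis of how the free-boundary (domain) variation feeds into the linearised two-point problem and a proof that the associated determinant has an admissible simple zero. Granting it, the Crandall--Rabinowitz theorem produces a $C^1$ curve $s\mapsto(a(s),b(s),B(s),\phi_s)$ of genuinely nonradial solutions with $a(s)\to a^*$ and $(b(s),B(s))\to0$ as $s\to0$, and with $b(s),B(s)$ nonconstant for $s\ne0$ because they carry the $\cos(m\te)$ mode; taking $s=s_n\to0$ gives items (i)--(iii), the regularity being recovered by elliptic bootstrapping together with the flatness of $f_a$ at $0$. Finally, item (iv) follows from the pointwise identity
\[
\nabla^\perp\phi\cdot\nabla\Delta\phi=\frac{\partial_\te\phi}{|x|}\,\partial_r f_a(|x|,\phi),
\]
obtained from \eqref{Ecuacion} and $\nabla^\perp\phi\cdot\nabla\phi=0$: since $\phi$ is positive and $f_a(r,t)$ is arranged to be independent of $r$ for the values $t$ actually attained on the support, the factor $\partial_r f_a(|x|,\phi)$ vanishes there and $\Delta\phi$ is constant along the streamlines of $\phi$, which is exactly the steady Euler equation.
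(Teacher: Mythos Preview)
The present paper does not prove this theorem at all: it is merely quoted verbatim from the authors' earlier work~\cite{EFR}, with the explicit attribution ``\cite[Theorem 2.2]{EFR}'' and the remark that the precise construction of the nonlinearities $f_{a_n}$ is carried out in~\cite[Sections~2 and~3]{EFR}. So there is no ``paper's own proof'' here to compare against, and your task was in effect to reconstruct the argument of a different paper.

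That said, your outline is the right template and matches the strategy of~\cite{EFR}: build a one-parameter family of radial profiles on concentric annuli, recast the overdetermined boundary problem as a nonlinear map in the boundary perturbations $(b,B)$, diagonalise the linearisation in Fourier modes, and locate a simple eigenvalue crossing at some $a^*$ to invoke Crandall--Rabinowitz. You also correctly flag the spectral/transversality verification as the genuine obstacle; in~\cite{EFR} this is indeed the technical core and requires a careful choice of the nonlinearity.

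Where your sketch is weakest is item~(iv). Your pointwise identity $\nabla^\perp\phi\cdot\nabla\Delta\phi=\tfrac{\partial_\theta\phi}{|x|}\,\partial_r f_a(|x|,\phi)$ is correct, but the claim that ``$f_a(r,t)$ is arranged to be independent of $r$ for the values $t$ actually attained'' is not how~\cite{EFR} proceeds and would undercut the construction: if $f_a$ were effectively autonomous on the range of $\phi_n$, the equation would be semilinear $\Delta\phi+g(\phi)=0$, and then rigidity results of the type proved in this very paper (Theorem~\ref{T.Rigidity1}) and in~\cite{R,Hamel} would obstruct nonradial compactly supported solutions, at least for $C^1$ nonlinearities. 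In~\cite{EFR} the $|x|$-dependence is genuine; the Euler condition~(iv) is instead built into the \emph{formulation} of the functional $\mathcal F$ via a level-set/stream-function ansatz (the solution is parametrised so that $\Delta\phi$ is, by construction, a function of $\phi$ along each bifurcated solution, even though the PDE one actually solves has explicit $|x|$-dependence). Your Dirichlet-to-Neumann reduction, taken at face value, would produce solutions of~\eqref{Ecuacion} satisfying the overdetermined boundary data but not automatically~(iv); one needs to encode the stream-function structure from the start, and that is the piece missing from your sketch.
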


\begin{remark}
    Note that the functions $f_{a_n}$ are only H\"older continuous in the second variable and cannot be freely chosen. We refer to \cite[Sections 2 and 3]{EFR} for the precise construction of these functions. 
\end{remark}

Having this theorem at hand, we set 
$$
\overline{\phi}_n(x) := \left \{
\begin{aligned}
\ & \phi_n(x) \quad && \textup{if } x \in \Om_{a_n}^{b_n,B_n}\,,\\
& 0 && \textup{if } x \not \in  \Om_{a_n}^{b_n,B_n}\,,
\end{aligned}
\right.
$$
which, by construction, is not locally radial. By \cite[Section 2]{EFR},  we get the existence of $n_0 \in \NN$ such that, for all $n \in \NN$ with $n \geq n_0$,
\begin{equation} \label{E.vnStationary}
\overline{v}_n := \nabla^{\perp} \overline{\phi}_n\,,
\end{equation}
is a compactly supported stationary Euler flow of class $C^k(\RR^2)$, which is not locally radial. In particular, for all $n \geq n_0$, $\overline{\phi}_n \in C^{k+1}(\RR^2)$ satisfies
$$
\nabla^{\perp} \overline{\phi}_n \cdot \nabla \Delta\overline{\phi}_n = 0 \quad \textup{in } \RR^2\,,
$$
and
$$
\supp(\overline{\phi}_n) \subseteq \overline{\Omega}_{a_n}^{\,b_n, B_n}\,.
$$

Next, we can prove our flexibility result, namely Theorem \ref{T.main1}.

\begin{proof}[Proof of Theorem \ref{T.main1}]
Let $\Om \in \RR$ be fixed but arbitrary, and let $n_0 \in \NN$ be as in \eqref{E.vnStationary}. We choose and fix $R := R(n_0)>0$ sufficiently large so that, for all $n \geq n_0$, 
$$
\overline{\Omega}_{a_n}^{\,b_n,B_n} \subset \subset B_{R}(0)\,.
$$
We consider a cut-off function $\chi \in C^\infty(\RR)$ such that
$$
\chi(t) = 1 \quad \textup{if }|t| \leq 1\,, \quad \textup{and} \quad \chi(t) = 0 \quad \textup{if } |t| \geq 2\,, 
$$
and, for all $n \geq n_0$, we set
$$
\widetilde{\phi}_n (x) := \chi\bigg(\frac{|x|}{R} \bigg) \overline{\phi}_n(x) - \left( 1- \chi\bigg(\frac{|x|}{R} \bigg) \right) \frac{\Om}{2}|x|^2\,.
$$

At this point, it is straightforward to check that, for all $n \geq n_0$,
$$
v_{0,n} := \nabla^{\perp} \widetilde{\phi}_n + \Om x^{\perp}\,,
$$
defines a compactly supported uniformly rotating solution to \eqref{E.Euler} of class $C^k(\RR^2)$ with angular velocity $\Om$, which is not locally radial. This concludes the proof. 
\end{proof}

\begin{remark}
Note that the corresponding vorticity is given by
$$
\omega_{0,n} := \nabla^{\perp} \cdot v_{0,n} = \Delta \widetilde{\phi}_n + 2 \Omega\,.
$$
In particular, it follows that
$$
\omega_{0,n}(x) = f_{a_n}(|x|, \overline{\phi}_n(x)) + 2\Om \quad \textup{in } B_R(0)\,,
$$
or equivalently that
\begin{equation} \label{E.remarkJavi}
\frac12\,\omega_{0,n}(x) - \Omega = f_{a_n}(|x|,\overline{\phi}_n(x)) \quad \textup{in } B_R(0)\,.
\end{equation}
Since $f_{a_n}$ changes sign (see \cite[Lemma 3.1 and Figure 1]{EFR}), if we want $\omega_{0,n}$ to be non-negative, we need to assume that $\Omega > 0$. This agrees with the rigidity results by G\'omez-Serrano, Park, Shi and Yao (see \cite[Theorem B]{GPSY}). Furthermore, from this change of sign and \eqref{E.remarkJavi}, we infer that
$$
\inf_{\RR^2} \omega_{0,n} \leq \inf_{B_R(0)} \frac12\,\omega_{0,n} < \Om < \sup_{B_R(0)} \frac12\,\omega_{0,n} \leq \sup_{\RR^2} \omega_{0,n} \,,
$$
which agrees with the recent paper by Fan, Wang and Zhang (see \cite[Theorem 1.4]{FanWangZhan25}). 
\end{remark}

\section{Rigidity results}

In this section we prove the two rigidity theorems stated in the Introduction.

First, let us state an auxiliary lemma. Although it is well known, we include a short proof for completeness.

\begin{lemma} \label{L.semilinear}
    Let $\Om \subseteq \RR^2$ be an open set and assume that $f,g \in C^1(\Omega)$ satisfy that
    $$
    \nabla^{\perp} f \cdot \nabla g = 0 \quad \textup{in } \Omega\,.
    $$
    If there exists $x_* \in \Omega$ such that $\nabla f (x_*) \neq 0$, then there exist a neighborhood $\mathcal U \subset \Omega$ of $x_*$ and a $C^1$ function $F: \RR \to \RR$ such that
    \begin{equation} \label{E.conclussionL.semilinear}
    g(x) = F(f(x)) \quad \textup{for all } x \in \mathcal{U}\,.
    \end{equation}
\end{lemma}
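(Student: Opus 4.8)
The plan is to exploit the fact that the hypothesis $\nabla^{\perp} f\cdot\nabla g=0$ is precisely the vanishing of the Jacobian determinant $\partial_{x_1} f\,\partial_{x_2} g-\partial_{x_2} f\,\partial_{x_1} g$ of the map $(f,g)$. Since $\nabla^{\perp} f$ is tangent to the level curves of $f$, the identity $\nabla g\cdot\nabla^{\perp} f=0$ says that $g$ is constant along each level curve of $f$; hence, near a point where $\nabla f\neq 0$, so that these level curves foliate a neighborhood, $g$ should depend on $x$ only through the value of $f$. I would make this rigorous by straightening the level curves of $f$ via the inverse function theorem.

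Concretely, since $\nabla f(x_*)\neq 0$, at least one partial derivative is nonzero; assume for definiteness that $\partial_{x_2} f(x_*)\neq 0$, the other case being entirely analogous. I would consider the $C^1$ map $\Phi(x):=(x_1,\,f(x))$, whose Jacobian at $x_*$ equals $\partial_{x_2} f(x_*)\neq 0$. By the inverse function theorem, $\Phi$ restricts to a $C^1$ diffeomorphism from a neighborhood of $x_*$ onto an open set, and after shrinking we may take this neighborhood to be $\mathcal{U}:=\Phi^{-1}(I\times J)$ for open intervals $I,J$, so that $\Phi(\mathcal{U})=I\times J$ is a box and $\partial_{x_2} f\neq 0$ throughout $\mathcal{U}$. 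Writing $(s,t)=\Phi(x)=(x_1,f(x))$ and setting $G:=g\circ\Phi^{-1}\in C^1(I\times J)$, one has $g(x)=G(x_1,f(x))$ on $\mathcal{U}$.

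Differentiating this relation gives $\partial_{x_1} g=\partial_s G+\partial_t G\,\partial_{x_1} f$ and $\partial_{x_2} g=\partial_t G\,\partial_{x_2} f$. Substituting into $\partial_{x_1} f\,\partial_{x_2} g-\partial_{x_2} f\,\partial_{x_1} g=0$, the two terms carrying $\partial_t G$ cancel and one is left with the identity $-\,\partial_{x_2} f\,\partial_s G=0$ on $\mathcal{U}$. As $\partial_{x_2} f\neq 0$ there, this forces $\partial_s G\equiv 0$ on the box $I\times J$, and since each horizontal slice $I\times\{t\}$ is connected, $G$ is independent of $s$: there is a function $F$ on $J$ with $G(s,t)=F(t)$. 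Pulling back yields $g=F\circ f$ on $\mathcal{U}$, which is the desired conclusion~\eqref{E.conclussionL.semilinear}.

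Finally, $F$ is $C^1$ because $F(t)=G(s_0,t)$ for any fixed $s_0\in I$ and $G\in C^1$; although $F$ is a priori defined only on the interval $J=f(\mathcal{U})$, it extends to a $C^1$ function on all of $\RR$ by a standard one-variable extension (e.g.\ extending affinely past the endpoints of $J$ and smoothing). I do not expect any genuine difficulty here: the statement is classical, and the only points requiring a little care are the connectedness of the $s$-slices, handled by taking $\mathcal{U}$ to be the preimage of a box, and the harmless $C^1$ extension of $F$ to the whole real line.
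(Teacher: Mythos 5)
Your proof is correct and follows essentially the same route as the paper: both straighten the level sets of $f$ via the inverse function theorem, you with the map $(x_1,f(x))$ and the paper with $(f(x),x_2)$, and both then show the transformed $g$ depends only on the $f$-coordinate before extending $F$ to all of $\RR$. Your explicit choice of $\mathcal{U}=\Phi^{-1}(I\times J)$ to guarantee connected slices is a slightly more careful handling of a point the paper leaves implicit, but the argument is the same.
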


\begin{proof}
    Since $\nabla f(x_*) \neq 0$, up to a rotation if necessary, we can assume that $\pd_{x_1} f(x_*) \neq 0$. We then introduce $$
    \Psi: \Om \to \RR^2\,, \quad x \mapsto(f(x_1,x_2),x_2)\,,
    $$
    and stress that
    $$
    \det ({\rm D} \Psi(x_*)) = \pd_{x_1} f(x_*) \neq 0\,.
    $$
    By the inverse function theorem, there exist neighborhoods $\mathcal{U} \subset \Omega$ of $x_*$, and $\mathcal{V}$ of $f(x_*)$ such that $f(\mathcal{U}) \subset \mathcal{V}$, and $\Psi: \mathcal{U} \to \mathcal{V}$ is a diffeomorphism. In particular, $y = \Psi(x)$ defines a $C^1$ system of local coordinates in $\mathcal{U}$. Moreover, taking $\mathcal{U}$ smaller if necessary, we can assume that $\pd_{x_1} f \neq 0$ in $\mathcal{U}$.

    We now set $\widetilde{g} := g \circ \Psi^{-1}: \mathcal{V} \to \RR$ and observe that
    \begin{align*}
        \pd_{y_2} \widetilde{g}(y) & = - \pd_{x_1} g (\Psi^{-1}(y)) \frac{\pd_{x_2}f(\Psi^{-1}(y))}{\pd_{x_1}f(\Psi^{-1}(y))} + \pd_{x_2} g(\Psi^{-1}(y)) \\
        & = \frac{1}{\pd_{x_1}f(\Psi^{-1}(y))} \nabla^{\perp}f(\Psi^{-1}(y)) \cdot \nabla g (\Psi^{-1}(y))= 0\,.
    \end{align*}
    Hence, there exists a $C^1$ function $F: f(\mathcal{U}) \to \RR$ such that $\widetilde{g}(y) = F(y_1)$. We can then take any $C^1$~extension of this function, which we denote by $F:\RR\to\RR$ with some abuse of notation. Going back to the original coordinates, we conclude that \eqref{E.conclussionL.semilinear} holds.
\end{proof}

\begin{proof}[Proof of Theorem \ref{T.Rigidity2}] 
    Let $r \in \pd \cB\backslash\{0\}$ be non-isolated in $\cB$. In view of Definition~\ref{D.locradial}, we argue by contradiction, assuming that there exists $x_* \in \pd B_r(0)$ such that $\nabla \phi(x_*) \neq 0$ and,  for any $x \in \RR^2 \setminus \{0\}$, we denote by $\nu_x = \frac{x}{|x|}$ the normal vector to $\partial B_{|x|}(0)$. 

    First of all, we prove that $\frac{\pd \phi}{\pd \nu_x} (x) = \frac{\pd \phi}{\pd \nu_y} (y) \neq 0$ for all $x$, $y$ with $|x|=|y|=r$. 
    
    Since $r > 0$ is not isolated, we get the existence of $(r_n)_n \subset \cB$ such that $r_n \to r$ as $n \to \infty$. Also, we set $t_n := r_n -r$ for all $n \in \NN$, and stress that $t_n \to 0$ as $n \to \infty$. Then, we infer that
    $$
    \phi(x+ t_n \nu_x) = \phi(y + t_n \nu_y)\,, \quad \textup{ for all } x,y \in \pd B_r(0)\,.
    $$
    By continuity,
    $$
    \phi(x) = \phi(y) \quad \textup{for all }x,y \in \pd B_r(0)\,.
    $$ In other words, we have shown that the $ r \in \cB$ (more generally, the set $\cB$ is closed). Moreover, for any $x,\,y \in \pd B_r(0)$,
    $$
    \frac{\pd \phi}{\pd \nu_x} (x) = \lim_{t \to 0} \frac{\phi(x+ t \nu_x)-\phi(x)}{t} = \lim_{t \to 0} \frac{\phi(y+ t \nu_y)-\phi(y)}{t} = \frac{\pd \phi}{\pd \nu_y} (y)\,.
    $$
    Combining that $r \in \cB$, the assumption $\nabla \phi(x_*) \neq 0$ and the identity above, we conclude that
    $$
    \frac{\pd \phi}{\pd \nu_x} (x) = \frac{\pd \phi}{\pd \nu_y} (y) \neq 0 \quad \textup{for all  } x,\, y\textup{ with }|x|=|y|=r\,,
    $$
    as desired.

    Now, let $x \in \pd B_r(0)$ be fixed but arbitrary. Since $\phi$ is a solution to $\nabla^{\perp}\phi \cdot \nabla \Delta \phi = 0$ in $\RR^2$ and $\nabla \phi(x) \neq 0$, by Lemma \ref{L.semilinear}, there exist $\ep > 0$ and $f \in C^1(\overline{B}_\ep(x))$ such that
    $$
    \Delta \phi + f(\phi) = 0 \quad \textup{in } B_\ep (x)\,.
    $$
    Having this function $f$ at hand, and taking $\ep > 0$ smaller if necessary, we denote by $\varphi$ the unique solution to the initial value problem:
    $$
    \varphi'' + \frac{1}{r} \varphi' + f(\varphi) = 0 \quad \textup{in } (r-\ep,r+\ep)\,, \quad \vp(r) = \phi(r)\,, \quad \vp'(r) = \phi'(r)\,.
    $$
    Here we are denoting by $\phi(r)$ and $\phi'(r)$ the values $\phi(x)$ and $\frac{\pd \phi}{\pd \nu_x} (x)$, respectively, for any choice $x \in \partial B_r(0)$. By the previous argument, these quantities do not depend on $x$.

    Then, we have that $\phi$ and $\vp$ are both solutions to 
    $$
    \Delta \psi + f(\psi) = 0 \quad \textup{in } B_\ep(x)\,, \quad \psi = \vp \,, \quad \frac{\pd \psi}{\pd \nu} = \vp' \quad \textup{on } \pd B_r(0)\,.
    $$
    By unique continuation, we then conclude that $\phi \equiv \vp$ in $B_\ep(x)$, which in turn implies that $\phi$ is radial in $B_\ep(x)$. 

    Since the point $x \in \pd B_r(0)$ was fixed but arbitrary, we can repeat this argument for every $x \in \pd B_r(0)$ and obtain that, for every $x \in \pd B_r(0)$ there exists $\ep_x > 0$ such that $\phi$ is radial in $B_{\ep_x}(x)$. Also, note that $\{B_{\frac{\ep_x}{2}}(x): x \in \pd B_r(0)\}$ is an open covering of $\pd B_r(0)$. Hence, we can cover $\pd B_r(0)$ by finitely many such balls. That is, there exist $x_1, \ldots, x_k \in \pd B_r(0)$ such that
    $$
    \pd B_r(0) \subset \bigcup_{\ell=1}^k B_{\frac{\ep_{\ell}}{2}}(x_\ell)\,.
    $$
    We use here the notation $\ep_\ell := \ep_{x_\ell}$ for all $\ell$. Then, setting $\widetilde{\ep} := \min_{\ell} \tfrac{\ep_\ell}{2}$, we get that
    $$
    \cC:= \{x \in \RR^2: r-\widetilde{\ep} < |x| < r+ \widetilde{\ep}\, \} \subset \bigcup_{\ell=1}^k B_{\ep_\ell}(x_\ell)\,,
    $$
    and so that $\phi$ is radial in $\cC$. This implies that $r \in \mathring{\cB}$, which is a contradiction.  
\end{proof}

Let us now present the proof of the remaining rigidity result, namely Theorem \ref{T.Rigidity1}:

\begin{proof}[Proof of Theorem \ref{T.Rigidity1}]
    We assume that $\Om > 0$. The result in the case where $\Om < 0$ follows from the one with $\Om > 0$ upon replacing $\phi$ by $-\phi$.  We set $m := \inf_{\overline{B}_R(0)} \phi$, and stress that, since $\phi$ is continuous, the infimum actually corresponds to a minimum. Then, we choose $\widetilde{R} \geq R$ sufficiently large so that
    $$
    m > - \frac{\Om}{2}\,\widetilde{R}^{2}\,,
    $$
    and we get that
     $$
     \Delta \phi + f(\phi ) = 0 \quad \textup{in } \RR^2\,, \quad \phi = - \frac{\Om}{2}|x|^2 \quad \textup{in } \RR^2 \setminus B_{\widetilde{R}}(0)\,, $$
     and
     $$
     \phi(x) > - \frac{\Om}{2} \widetilde{R}^{2} \quad \textup{in } B_{\widetilde{R}}(0)\,.
     $$
     
     At this point, we set
     $$
     \varphi(x) := \phi(x) + \frac{\Om}{2} \widetilde{R}^{2}\quad \textup{and} \quad g(s):= f\Big(s- \frac{\Om}{2} \widetilde{R}^{2}\Big)\,,
     $$
     and stress that $\varphi$ is a solution to
     \begin{equation*} 
     \Delta \varphi + g(\varphi) = 0\,, \quad \varphi > 0 \quad \textup{in } B_{\widetilde{R}}(0)\,, \quad \varphi = 0 \quad \textup{on } \pd B_{\widetilde{R}}(0)\,.
     \end{equation*}
     Having this equation at hand, the result immediately follows from  \cite[Theorem 7.2 and Corollary 7.6]{Brock} in the case where $f \in C(\RR)$, and \cite[Theorem 1]{GNN} in the case where $f \in C^1(\RR)$.
\end{proof}

\section*{Acknowledgments}  
This work has received funding from the European Research Council (ERC) under the European Union's Horizon 2020 research and innovation programme through the grant agreement~862342 (A. Enciso). A. Enciso is also partially supported by the grant PID2022-136795NB-I00 of the Spanish Science Agency, and the ICMAT--Severo Ochoa grant CEX20 19-000904-S. A. J. Fern\'andez is partially supported by the grants PID2023-149451NA-I00 of MCIN/AEI/10.13039/ 501100011033/ FEDER, UE, and Proyecto de Consolidaci\'on Investigadora 2022, CNS2022-135640, MICINN (Spain). D. Ruiz has been supported by: the Grant PID2024-155314NB-I00 of the MICIN/AEI, the IMAG-Maria de Maeztu Excellence Grant CEX2020-001105-M funded by MICIN/AEI, and the Research Group FQM-116 funded by J. Andaluc\'ia.

\bibliographystyle{amsplain}

\begin{thebibliography}{99}\frenchspacing

\bibitem{Brock} F. Brock, Continuous rearrangement and symmetry of solutions of elliptic problems, Proc. Indian
Acad. Sci. Math. Sci. 110(2) (2000) 157--204.

\bibitem{Burbea1982}
J. Burbea, Motions of vortex patches with constant vorticity,
Comm. Pure Appl. Math. 39(6) (1982) 797--817.

\bibitem{CastroCordobaGomezSerrano2016}
A. Castro, D. C\'ordoba, J. G\'omez-Serrano, Uniformly rotating analytic global patch solutions for active scalars, Ann. PDE {2}(1) (2016) 1--34.

\bibitem{CastroCordobaGomezSerrano2019}
A. Castro, D. C\'ordoba, J. G\'omez-Serrano, Uniformly rotating smooth solutions for the 2D Euler equations, Arch. Ration. Mech. Anal. 231(2) (2019) 719--785.

\bibitem{DeR-E-R}{F. De Regibus, F. Esposito, D. Ruiz, }{Rigidity results for finite energy solutions to the stationary 2D Euler equations, }\href{https://arxiv.org/abs/2505.04542}{arXiv:2505.04542}.

\bibitem{DeemZabusky1978}
G. S. Deem, N. J. Zabusky, Vortex waves: Stationary ``V-states'', interactions, recurrence, and breaking, Phys. Fluids 23(6) (1978) 1238--1249.

\bibitem{EFR} A. Enciso, A. J. Fern\'andez, D. Ruiz, Smooth nonradial stationary Euler flows on the plane with compact support, \href{https://arxiv.org/abs/2406.04414}{arXiv:2406.04414}


\bibitem{FanWangZhan25}
B. Fan, Y. Wang, W. Zhan, {Remarks on radial symmetry of stationary and uniformly‑rotating solutions for the 2D Euler equation}, 
\href{https://arxiv.org/abs/2506.05034}{arXiv:2506.05034}

\bibitem{GarciaHmidiSoler2020}
C. García, T. Hmidi, J. Soler, Non-uniform rotating vortices and periodic orbits for the two-dimensional Euler equations, Arch. Ration. Mech. Anal. {238} (2020) 929--1085.


\bibitem{GarciaHmidiMateu2024}
C. Garc\'ia, T. Hmidi, J. Mateu, Time periodic solutions close to localized radial monotone profiles for the 2D Euler equations, Ann. PDE 10(1) (2024) 1--75.

\bibitem{GNN} B. Gidas, W. Ni, L. Nirenberg, Symmetry and related properties via the maximum principle, Comm. Math.
Phys. 68 (1979) 209--243.


\bibitem{GPSY} J. G\'omez-Serrano, J. Park, J. Shi, Y. Yao, Symmetry in stationary and uniformly-rotating solutions of active scalar equations, Duke Math. J. 170 (13) (2021) 2957--3038.


\bibitem{Hamel}
F. Hamel and N. Nadirashvili, Circular flows for the Euler equations in two-dimensional annular domains, and related free boundary problems, J. Europ. Math. Soc. 25 (2023), 323--368.

\bibitem{HassainiaMasmoudiWheeler19}
Z. Hassainia, N. Masmoudi, M. H. Wheeler, Global bifurcation of rotating vortex patches, Comm. Pure Appl. Math. {72} (2019) 1357--1421.

\bibitem{HmidiMateu2017}
T. Hmidi, J. Mateu, Existence of corotating and counter-rotating vortex pairs for active scalar equations, Comm. Math. Phys. 350(2) (2017) 699--747.

\bibitem{HmidiMateuVerdera2013}
T. Hmidi, J. Mateu, J. Verdera, Boundary regularity of rotating vortex patches, Arch. Ration. Mech. Anal. 209(1) (2013) 171--208.

\bibitem{HmidiMateuVerdera2015}
T. Hmidi, J. Mateu, J. Verdera, On rotating doubly connected vortex patches, J. Math. Pures Appl. 103(3) (2015) 703--740.

\bibitem{Kirchhoff1876}
G. Kirchhoff, \"Uber eine besondere Klasse von Wirbeln, J. Reine Angew. Math. (1876).


\bibitem{Park2022}
J. Park, Quantitative estimates for uniformly-rotating vortex patches, Adv. Math. 411 (2022) 108829.


\bibitem{R} D. Ruiz, Symmetry Results for Compactly Supported Steady Solutions of the 2D Euler Equations, Arch. Ration. Mech. Anal. 247 (1) (2023), paper n. 40, 25 pp.


\bibitem{wang}{Y. Wang, W. Zhan,}{ On the rigidity of the 2D incompressible {E}uler equations,} \href{https://arxiv.org/abs/2307.00197.}{arXiv:2307.00197}

\end{thebibliography}

\end{document}